\newtheorem{definition}{Definition}[section]
\newtheorem{theorem}[definition]{Theorem}
\newtheorem{lemma}[definition]{Lemma}
\newtheorem{mlemma}[definition]{Main Lemma}
\newtheorem{example}[definition]{Example}
\newtheorem{gevolg}[definition]{Corollary}              
\newtheorem{commentaar}[definition]{Remark}
\renewenvironment{proof}[1][\noindent Proof]{{\par\pushQED{\qed}%
\itshape #1\@. }}{\popQED}
\newcommand{\qbin}[2]{\genfrac{[}{]}{0pt}{}{#1}{#2}}
\newcommand{\ds}{\displaystyle}
\newcommand{\cS}{\mathcal{S}}
\newcommand{\cP}{\mathcal{P}}
\newcommand{\cL}{\mathcal{L}}
\definecolor{blauw}{rgb}{0,0,0.6}
\newcommand{\PG}{\mathrm{PG}}   
\title{On the sunflower bound for $k$-spaces, %
pairwise intersecting in a point}
\author{A. Blokhuis\thanks{Department of Mathematics and Computer Science, Eindhoven University of Technology, The Netherlands, \emph{a.blokhuis@tue.nl}. }, M. De Boeck\thanks{Department of Mathematics: Algebra and Geometry, Ghent University, Flanders, Belgium, \emph{maarten.deboeck@ugent.be}.},  J. D'haeseleer\thanks{(corresponding author) Department of Mathematics: Analysis, Logic and Discrete Mathematics, Ghent University, Flanders, Belgium, \emph{jozefien.dhaeseleer@ugent.be}.}}
\date{}
\begin{document}
\maketitle
\begin{abstract}
A $t$-intersecting constant dimension subspace code $C$ is a set of $k$-dimensional subspaces in a projective space $\PG(n,q)$, where distinct subspaces intersect in exactly a $t$-dimensional subspace. A classical example of such a code is the sunflower, where all subspaces pass through the same $t$-space. The sunflower bound states that such a code  is a sunflower if
$|C| > \left( \frac {q^{k + 1} - q^{t + 1}}{q - 1} \right)^2 +
\left( \frac {q^{k + 1} - q^{t + 1}}{q - 1} \right) + 1$.

In this article we will look at the case $t=0$ and we will improve this bound for $q\geq 9$: a set $\cS$ of  $k$-spaces in $\PG(n,q), q\geq 9$, pairwise intersecting in a point is a sunflower if  $|\mathcal{S}|> \left(\frac{2}{\sqrt[6]{q}}+\frac{4}{\sqrt[3]{q}}-\frac{5}{\sqrt{q}}\right)\left(\frac {q^{k + 1} - 1}{q - 1}\right)^2$.
\end{abstract}

\textbf{Keywords}: Subspace codes, $q$-analogue problems, Sunflower bound, Random network coding.
\par \textbf{MSC 2010 codes}: 51E20, 05B25, 52E14, 51E23, 51E30.

\section{Introduction}
In a vector space $V$, a $(k,t)$-SCID is a set of $k$-dimensional subspaces of $V$, pairwise intersecting in exactly a $t$-dimensional subspace (SCID stands for:
\textit{Subspaces with Constant Intersection Dimension}). We will work in a projective context, so a $(k+1,t+1)$-SCID corresponds to a set of $k$-dimensional projective subspaces in $\PG(n,q)$, that pairwise intersect in a $t$-dimensional space (see \cite{Eisfeld}). A $(k+1,t+1)$-SCID is also called a $t$-intersecting constant dimension subspace code, where the code words have projective dimension $k$. Note that $(k+1,0)$-SCIDs correspond with partial $k$-spreads in $\PG(n,q)$.

An example of a $(k+1,t+1)$-SCID is a sunflower, which is a set of
$k$-spaces, passing through the same $t$-space and having no points in common outside this $t$-space. It can be shown that a $t$-intersecting constant dimension subspace code is a sunflower if the code has many code words.

\begin{theorem}\cite[Theorem 1]{etzion}\label{sunny}
A $(k+1,t+1)$-SCID $C$ is a sunflower if
\begin{align*}
|C| > \left( \frac {q^{k+1} - q^{t+1}}{q - 1} \right)^2 +
\left( \frac {q^{k+1} - q^{t+1}}{q - 1} \right) + 1.
\end{align*}
\end{theorem}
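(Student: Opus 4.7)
The plan is to argue by contrapositive: assume $C$ is a $(k+1,t+1)$-SCID that is not a sunflower, and show $|C| \le N^2+N+1$, where $N = \frac{q^{k+1}-q^{t+1}}{q-1}$ denotes the number of points of a $k$-space lying outside a $t$-subspace. I first pick any two members $A, B \in C$ and set $T := A \cap B$. If every $X \in C$ contained $T$, then $C$ would already be a sunflower through $T$. Otherwise, some $Y \in C$ satisfies $T \not\subseteq Y$, and I partition $C$ as
\[
C_1 = \{X \in C : T \subseteq X\}, \qquad C_2 = C \setminus C_1,
\]
with $A, B \in C_1$ and $Y \in C_2$. Two distinct members of $C_1$ meet in a $t$-space containing $T$, hence exactly in $T$, so $C_1$ forms a sub-sunflower.

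I then bound $|C_1|$ by using $Y$ as a probe: each $X \in C_1$ meets $Y$ in a $t$-space, and distinct such traces share only $Y \cap T$, of projective dimension at most $t-1$ since $T \not\subseteq Y$. A point-count in the $k$-space $Y$ yields $|C_1| \le \frac{q^{k-t+1}-1}{q-1} \le N+1$. For $|C_2|$, the key observation is that the map $Z \mapsto (Z \cap A, Z \cap B)$ is injective on $C_2$: if two members $Z_1, Z_2 \in C_2$ had equal traces on both $A$ and $B$, then $Z_1 \cap Z_2 \supseteq Z_1 \cap A$ would force $Z_1 \cap Z_2 = Z_1 \cap A \subseteq A \cap B = T$, contradicting $T \not\subseteq Z_1$. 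The remaining task is to bound the number of admissible trace-pairs, subject to the compatibility condition $(Z \cap A) \cap T = (Z \cap B) \cap T$. In the base case $t = 0$ this is immediate: choosing a point $p \in (Z\cap A)\setminus T$ and $q \in (Z \cap B)\setminus T$ injects $C_2$ into pairs in $(A\setminus T)\times(B\setminus T)$, of which there are $N^2$. For general $t$, one reduces to this situation via a weighted incidence count between $Z$'s and admissible point-pairs $(p,q)$, producing $|C_2| \le N^2$. Summing the two bounds gives $|C| \le N^2 + N + 1$.

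The main obstacle is the sharp count for $|C_2|$ when $t \ge 1$: a pair $(p,q) \in (A\setminus T)\times (B\setminus T)$ no longer uniquely determines a member $Z$, since several $Z$'s may all contain the line $\langle p,q\rangle$. Quotienting by such a line produces a $(k,t)$-SCID in a projective space of smaller dimension, suggesting an inductive treatment, or alternatively a direct incidence argument that weights each pair by its multiplicity. Balancing these multiplicities against the fact that each $Z \in C_2$ contributes at least $q^t$ points to each of $(Z \cap A)\setminus T$ and $(Z \cap B)\setminus T$, and hence at least $q^{2t}$ valid pairs, is the technical heart of the proof; a judicious choice of the initial pair $(A,B)$, e.g.\ maximising $|C_1|$, should help absorb any boundary overcounts into the additive term $N+1$.
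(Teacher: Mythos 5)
This theorem is quoted from \cite{etzion}; the paper contains no proof of it, so your attempt can only be judged on its own merits. Your Deza-style decomposition is sound as far as it goes: writing $T=A\cap B$ and taking a probe $Y\in C$ with $T\not\subseteq Y$, the subfamily $C_1$ of blocks through $T$ is indeed a sunflower, the traces $X\cap Y$ for $X\in C_1$ pairwise meet exactly in $Y\cap T$ so that $|C_1|\le\frac{q^{k-t+1}-1}{q-1}\le N+1$, and the map $Z\mapsto(Z\cap A,Z\cap B)$ really is injective on $C_2$. For $t=0$ --- the only case this paper actually uses --- your proof is complete: a trace is then a single point of $A\setminus T$, resp.\ $B\setminus T$, so injectivity immediately gives $|C_2|\le N^2$ and hence $|C|\le N^2+N+1$.

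For $t\ge 1$, however, the step $|C_2|\le N^2$ is a genuine gap, and it sits exactly where you flag it. Injectivity alone bounds $|C_2|$ only by the number of admissible trace pairs, which is of order $\qbin{k+1}{t+1}^2\approx q^{2(t+1)(k-t)}$; already for $k=3$, $t=1$ this is about $q^8$ against $N^2\approx q^6$, so nothing follows. The repair you sketch --- double counting point pairs $(p,q)\in\bigl((Z\cap A)\setminus T\bigr)\times\bigl((Z\cap B)\setminus T\bigr)$, each $Z$ supplying at least $q^{2t}$ of the $N^2$ available pairs --- founders on the multiplicity: a single pair is reused by every block of $C_2$ containing the line $\langle p,q\rangle$, and nothing in the hypotheses caps that number below roughly $\theta_{k-1}$ (for $t=1$ the traces on $A$ of such blocks are lines through $p$ meeting pairwise only in $p$, so up to $\theta_{k-1}$ of them can coexist). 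The resulting bound $|C_2|\lesssim N^2\theta_{k-1}/q^{2t}$ is far too weak, and your suggestion of choosing $(A,B)$ to maximise $|C_1|$ does not obviously control it either. Some further idea is needed --- for instance, showing that a high-multiplicity pair forces a large sub-sunflower whose members can then be handled by the probe argument, or quotienting by $\langle p,q\rangle$ and inducting on $t$ as you hint, with the induction actually carried out. As written, your argument proves the theorem only for $t=0$.
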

It is believed that the sunflower bound is in general not tight.
In \cite{sunflowern-2}, the sunflower bound for $(k+1,1)$-SCIDs was studied.
\begin{theorem}[{\cite[Theorem 2.1]{sunflowern-2}}]
	Let $C$ be a $(k+1,1)$-SCID, with $k\geq 4$. If 
	\begin{align*}
	|C|\geq \left( \frac {q^{k+1} - q}{q - 1} \right)^2 +
	\left( \frac {q^{k+1} - q}{q - 1} \right) -q^{k},
	\end{align*} then $C$ is a sunflower.
\end{theorem}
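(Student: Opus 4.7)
The plan is to argue by contradiction, refining the double-counting proof behind Theorem \ref{sunny}. Suppose $C$ is a $(k+1,1)$-SCID of the stated size that is not a sunflower. Writing $\theta = (q^{k+1}-1)/(q-1)$ for the number of points in a $k$-space, fix $S \in C$ and for each point $p \in S$ set $n_p = |\{S' \in C \setminus \{S\} : p \in S'\}|$. Since distinct elements of $C$ meet in exactly one point,
\[
\sum_{p \in S} n_p = |C| - 1,\qquad
\sum_{p \in S}\binom{n_p}{2} = N_S,
\]
where $N_S$ is the number of unordered pairs in $C \setminus \{S\}$ whose unique intersection point lies in $S$. Cauchy--Schwarz yields $\sum_{p\in S} n_p^2 \ge (|C|-1)^2/\theta$. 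A sunflower corresponds to some $n_p$ being equal to $|C|-1$, so the goal is to show that if this fails for every $p \in S$ and every choice of $S$, then the $n_p$ profile has too much slack to be compatible with the size bound.

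Next I would analyse the point $p^{*}$ of globally maximal multiplicity $m := \max_p |\{S' \in C : p \in S'\}|$. Setting $C_1 := \{S' \in C : p^{*} \in S'\}$ and $C_2 := C \setminus C_1$, the elements of $C_1$ project from $p^{*}$ to pairwise disjoint $(k-1)$-spaces in $\PG(n-1,q)$, i.e., to a partial $(k-1)$-spread, which bounds $m$ in terms of the quotient geometry. If $C_2 = \emptyset$ we have a sunflower; otherwise, any $T \in C_2$ induces a map $f\colon C_1 \to T\setminus\{p^{*}\}$ via $S' \mapsto S'\cap T$, and each element of $C_2$ must meet every element of $C_1$ in a unique point while also meeting every other element of $C_2$ in a unique point. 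Careful counting of these constraints, under the blanket condition that no point has multiplicity exceeding $m$, should bound $|C_1|\cdot|C_2|$ and $|C_2|^2$ in terms of $\theta$ and $m$; maximising $|C_1|+|C_2|$ would then produce an upper bound on $|C|$ matching $\theta^2 - \theta - q^k$, contradicting the hypothesis.

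The main obstacle is obtaining the $-q^k$ correction with the correct constant. The original argument of Theorem \ref{sunny} boils down to a pigeonhole on $N_S$ alone, and is off by $2\theta + 1 + q^k$. To recover the $q^k$ term I would exploit a second-order feature not used in the classical proof: most plausibly, that the pairs of elements of $C$ whose intersection point lies outside $S$ concentrate on at most $q^k$ distinct points (for instance, because each such point, together with $p^{*}$, spans a line whose role in the quotient partial spread is tightly constrained), or, alternatively, the exact maximum-size bound for a partial $(k-1)$-spread in the quotient at $p^{*}$. The assumption $k \ge 4$ should enter exactly here, to avoid degenerate small-dimensional configurations in which the partial-spread counting degenerates and the improvement is lost.
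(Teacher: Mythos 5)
This statement is quoted in the paper from an external reference (\cite{sunflowern-2}); the paper itself gives no proof of it, so there is no in-house argument to compare yours against. Judged on its own terms, your proposal is a programme rather than a proof, and the gap is exactly where the theorem's content lies. Everything you set up in the first two paragraphs (the multiplicities $n_p$, Cauchy--Schwarz, the split into $C_1 = \{S' : p^* \in S'\}$ and $C_2$, the injection $S' \mapsto S' \cap T$ into a fixed $T \in C_2$, the partial-spread structure in the quotient at $p^*$) reproduces the machinery that already yields the classical bound of Theorem \ref{sunny}: the injection gives $|C_1| \le \theta_k$, and combining this over the points of a fixed block gives $|C| \le \theta_k^2 - \theta_k + O(1)$. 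None of it, as stated, produces the extra saving of $q^k+1$ that distinguishes the claimed bound $\theta_k^2 - \theta_k - q^k$ from the classical one. You acknowledge this yourself (``the main obstacle is obtaining the $-q^k$ correction'') and then offer two mutually inconsistent guesses (``most plausibly \dots or, alternatively \dots'') for where it might come from, without verifying either. A conditional or speculative step at precisely the point of improvement is a missing proof, not a proof.

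Two further concrete problems. First, the Cauchy--Schwarz inequality $\sum_p n_p^2 \ge (|C|-1)^2/\theta$ is introduced and never used; it points the wrong way for an upper bound on $|C|$ and plays no role in the rest of your outline, so it should either be removed or connected to the variance/second-moment count via $N_S$ that you would actually need. Second, your accounting of the gap to be closed is off: with $M = (q^{k+1}-q)/(q-1) = \theta_k - 1$, the classical threshold is $M^2+M+1 = \theta_k^2-\theta_k+1$ and the target is $\theta_k^2-\theta_k-q^k$, a difference of $q^k+1$, not $2\theta_k+1+q^k$. Finally, you say $k \ge 4$ ``should enter'' to avoid degeneracies, but you do not identify which step actually fails for $k=3$; in a correct proof the role of this hypothesis must be pinned down, not presumed.
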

In this article we will give a better result for $(k+1,1)$-SCIDs.
In \cite{leo} a geometrical sunflower bound was studied: the authors investigate SCIDs that span a large subspace. They prove that sunflowers are the SCIDs that span the `largest' subspaces. 

We suppose that $k \geq 3$ as for $( 2, 1 )$-SCIDs 
we, more generally, known that every $( k+1, k)$-SCID is a sunflower or consists of all $k$-spaces in a fixed $(k+1)$-space.
For $( 3, 1)$-SCIDs, an almost complete classification is known, see \cite{beutel}.

In Section \ref{sectieprelimi}, we give some definitions and general lemmas.
In Section \ref{section3}, we start with the Main Lemma that gives an
important inequality. Using this inequality we continue with
Theorem \ref{maintheorem} that gives
an improvement on the sunflower bound if $k\geq 3$ and $ q\geq 9$ (and if $q\geq 7$ and $k\geq 5$).

\section{Preliminaries} \label{sectieprelimi}
We start with the definition of the $q$\textit{-ary Gaussian coefficient}.

\begin{definition}
	Let $q$ be a prime power,  let $n,k$ be  non negative integers with $k \leq n $. The $q$\textit{-ary Gaussian
		coefficient} of $n$ and $k$ is defined by
	\begin{equation*}
	\qbin{n}{k}_q=
	\begin{cases} 
	\frac{(q^n-1)\cdots(q^{n-k+1}-1)}{(q^k-1)\cdots(q-1)}  \hspace{1.3cm}\textnormal{ if $k>0$} \\
	\hspace{1.5cm} 1 \hspace{2.5cm}\textnormal{otherwise} 
	\end{cases}
	\end{equation*}
\end{definition}
We will write $\qbin{n}{k}$, if the field size $q$ is clear from the context. The number of $k$-spaces in $\PG(n,q)$ is $\qbin{n+1}{k+1}$  and the number of $k$-spaces through a fixed $t$-space  in $\PG(n,q)$, with $0 \leq t \leq k$, is $\qbin{n-t}{k-t}$.
Moreover, we will denote the number $\qbin{n+1}{1}$ by the symbol $\theta_n$.

From now on we consider a fixed $(k+1,1)$-SCID $\cS$ that is {\em not} a sunflower, of size $|\cS|=(1 - s) \theta_{k}^2$, $0 < s < 1$. Note that the size of $|\cS|$ is smaller than the sunflower bound for $s > \frac {1}{\theta_k}-\frac {1}{\theta_k^2}$. We will give, for a fixed value of $k$ and field size $q$, an upper bound on $1 - s$. For convenience we will call a $k$-space contained in $\cS$ a {\em block}.

\begin{definition}
Consider the above set $\cS$ of $k$-spaces, or blocks.
The sets of points and lines that are contained in a block are denoted by $\cP_\cS$ and $\cL_\cS$ respectively.
\end{definition}

\begin{lemma}\label{puntoprecht}
Suppose $P \in \cP_\cS$, then $P$ lies in at most $\theta_k$ blocks
and  on at most $\theta_k \cdot \theta_{k-1}$ lines of $\cL_\cS$.
\end{lemma}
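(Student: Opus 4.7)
The plan is to exploit the assumption that $\cS$ is not a sunflower in order to find a block disjoint from $P$, and then to ``track'' the blocks through $P$ via their intersection points on this outside block.

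First I would argue that, because $\cS$ is not a sunflower, there must exist a block $B^{*}\in\cS$ with $P\notin B^{*}$. Indeed, if every block of $\cS$ contained $P$, then any two blocks $B,B'\in\cS$ would satisfy $P\in B\cap B'$, and since the SCID condition forces $|B\cap B'|=1$ point, we would have $B\cap B'=\{P\}$ throughout, making $\cS$ a sunflower with center $P$, contradicting our standing assumption.

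Next, let $B_1,\dots,B_m$ denote the blocks of $\cS$ through $P$. For each $i$, the intersection $B_i\cap B^{*}$ is a single point $Q_i\in B^{*}$, and $Q_i\neq P$ since $P\notin B^{*}$. The key observation is that the $Q_i$ are pairwise distinct: if $Q_i=Q_j$ with $i\neq j$, then both $P$ and $Q_i$ would lie in $B_i\cap B_j$, contradicting the fact that two distinct blocks meet in exactly one point. Hence $m\le |B^{*}|=\theta_k$, which proves the first bound.

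For the second bound, I would use a double-counting of the lines of $\cL_\cS$ through $P$. Every such line lies in some block of $\cS$ containing $P$, so it is a line through $P$ inside one of $B_1,\dots,B_m$. Each $B_i$ contains exactly $\theta_{k-1}$ lines through $P$ (the number of points in the quotient $\PG(k-1,q)$ of the $k$-space $B_i$ by $P$), and two distinct blocks $B_i,B_j$ through $P$ cannot share a line through $P$ because that would again force $|B_i\cap B_j|\ge q+1>1$. Summing over the $m\le\theta_k$ blocks gives at most $\theta_k\cdot\theta_{k-1}$ lines of $\cL_\cS$ through $P$, as required. There is no real obstacle here: the whole argument is a counting trick, and the only conceptual point is recognising that the non-sunflower hypothesis is precisely what guarantees the existence of the ``witness'' block $B^{*}$ that fuels both bounds.
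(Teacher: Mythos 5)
Your proposal is correct and follows essentially the same route as the paper: both use the non-sunflower hypothesis to produce a block $S_0$ (your $B^{*}$) avoiding $P$, obtain the bound $\theta_k$ on the number of blocks through $P$ from the injectivity of the map sending a block through $P$ to its intersection point with $S_0$, and then multiply by the $\theta_{k-1}$ lines through a point in a $k$-space. Your write-up merely makes explicit two details the paper leaves implicit (the existence of $B^{*}$ and the injectivity of $B_i\mapsto Q_i$).
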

\begin{proof}
Consider a block $S_0$ not through $P$ (such an $S_0$ exists since
$\cS$ is not a sunflower).
Every block through $P$ contains a point $Q$ of $S_0$ and every line $PQ$ 
 with $Q \in S_0$ 
is
contained in at most one block. In this way we find at most $\theta_k$ blocks that contain $P$.
The lemma follows since the number of lines through a point in a
$k$-space is $\theta_{k-1}$.
\end{proof}

From now on we distinguish `rich' and `poor' points and lines in $\cP_\cS$ and $\cL_\cS$.
First we give the definition, then we continue with some counting arguments.

\begin{definition} \label{defrijkpunt}
Suppose $c,d$ are constants between $s$ and $1$.

A point $P \in \cP_\cS$ is \emph{$c$-rich} if it is included in more than $(1 - c) \theta_k$ blocks. A point is \emph{$c$-poor} if it is not $c$-rich. 

A line $l\in \cL_\cS$ is \emph{$(c,d)$-rich} if it contains more than $(1 - d)(q + 1)$ $c$-rich points.
\end{definition}

We will call $c$-rich and $c$-poor points, and $(c,d)$-rich lines 
\emph{rich} and \emph{poor}
points, and \emph{rich} lines respectively, if the constants $c$ and $d$ are clear
from the context.

\begin{lemma} \label{rijkpuntpersolid}
For the number $r$ of $c$-rich points in a block, we find
\[
r \ge r_0 = \left( 1 - \frac {s}{c} \right) \theta_k\;.
\]
\end{lemma}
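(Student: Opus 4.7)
The plan is to double-count pairs $(P,S')$ where $P$ is a point of a fixed block $B\in\cS$ and $S'\in\cS\setminus\{B\}$ is a block through $P$. Since $\cS$ is a $(k+1,1)$-SCID, every $S'\neq B$ meets $B$ in exactly one point, so the total count equals $|\cS|-1=(1-s)\theta_k^2-1$. Writing $n_P$ for the number of blocks of $\cS$ through a point $P\in\cP_\cS$, this is the identity
\begin{align*}
\sum_{P\in B}(n_P-1)=|\cS|-1.
\end{align*}

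Next I would split the $\theta_k$ points of $B$ into the $r$ that are $c$-rich and the $\theta_k-r$ that are $c$-poor. Lemma~\ref{puntoprecht} gives $n_P\leq\theta_k$ for every point in $\cP_\cS$, while by definition a $c$-poor point satisfies $n_P\leq(1-c)\theta_k$. Feeding these two upper bounds into the double count yields
\begin{align*}
(1-s)\theta_k^2-1\;\leq\;r(\theta_k-1)+(\theta_k-r)\bigl((1-c)\theta_k-1\bigr).
\end{align*}
A short expansion collapses the right-hand side to $cr\theta_k+(1-c)\theta_k^2-\theta_k$, so that $(c-s)\theta_k^2\leq cr\theta_k-\theta_k+1$, from which one solves directly for $r$ and obtains $r\geq(1-s/c)\theta_k+(1/c)(1-1/\theta_k)$, which is at least $r_0=(1-s/c)\theta_k$.

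There is no real obstacle here; the proof is a one-line double count combined with the pointwise upper bound from Lemma~\ref{puntoprecht}. The only item that merits a moment of care is the treatment of the strict inequality in Definition~\ref{defrijkpunt}: a $c$-poor point has $n_P\leq(1-c)\theta_k$, which is what makes the estimate on the contribution of the poor points come out cleanly.
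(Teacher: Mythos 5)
Your proof is correct and is essentially identical to the paper's: both fix a block, double-count incident pairs $(P,S')$ with $P\in B$ and $S'\neq B$, bound the contribution of rich points via Lemma~\ref{puntoprecht} and of poor points via the definition, and solve the resulting linear inequality for $r$. Your version even retains the small positive term $\frac{1}{c}\left(1-\frac{1}{\theta_k}\right)$ before discarding it, which the paper drops one step earlier.
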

\begin{proof}
Fix a block $S_0$ in $\cS$, and count the number of elements in $\cS$ that intersect $S_0$ in a point: through every rich point $P$ of $S_0$ there are at most $\theta_k - 1$  blocks different from $S_0$, by Lemma \ref{puntoprecht}.
Through every line spanned by $P$ and a point of such a block,
there is at most one block.
\par Every poor point of $S_0$ lies in at most $(1 - c) \theta_k - 1$  other blocks by definition.
We double-count point-block pairs $(P,S)$ with $P\in S$ where $P\in S_0$ and $S\neq S_0$ to obtain the following inequality:
\begin{align*}
	&& r ( \theta_k - 1) + (\theta_k - r) \left( (1 - c) \theta_k - 1 \right)%
	&\ge |\cS|-1 \\
	&\Leftrightarrow\quad  & r (\theta_k - 1 - (1 - c) \theta_k + 1)%
	&\ge (1 - s) \theta_k^2 - 1 - (1 - c) \theta_k^2 + \theta_k \\
	&\Rightarrow\quad  & r c \theta_k &\ge (c - s) \theta_k^2 \\
	&\Leftrightarrow\quad  & r &\ge \left(1 - \frac {s}{c}\right) \theta_k\;.\qedhere
\end{align*}
\end{proof}


\begin{lemma}\label{numberrichlines}
A block contains at least
\[
\frac {\theta_k \theta_{k-1}}{q+1} \cdot \left( 1 - \frac s{cd} \right)
\]
$(c,d)$-rich lines and the total number of $(c,d)$-rich lines is at least $(1 - s) \theta_k^2$ times this number.
\end{lemma}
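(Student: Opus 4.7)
The plan is to double-count incidences between rich points and lines inside a fixed block, then extend to the total count using the fact that distinct blocks share only a point.

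First I would fix a block $S_0 \in \cS$. By Lemma \ref{rijkpuntpersolid}, $S_0$ contains at least $r_0 = (1 - s/c)\theta_k$ $c$-rich points. The total number of lines in $S_0$ equals $L = \theta_k \theta_{k-1}/(q+1)$ (each point of $S_0$ lies on $\theta_{k-1}$ lines of $S_0$, each line has $q+1$ points).

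Next I would double-count pairs $(P,l)$ with $P$ a $c$-rich point of $S_0$ and $l$ a line of $S_0$ through $P$. From the point side, each of the at least $r_0$ rich points lies on $\theta_{k-1}$ lines of $S_0$, so the number of pairs is at least $r_0\,\theta_{k-1} = (1 - s/c)\theta_k\theta_{k-1}$. From the line side, let $R$ be the number of $(c,d)$-rich lines of $S_0$. A $(c,d)$-rich line contributes at most $q+1$ rich points, while a non-$(c,d)$-rich line contributes at most $(1-d)(q+1)$ rich points, giving an upper bound
\[
R(q+1) + (L-R)(1-d)(q+1).
\]
Combining and simplifying using $L(q+1)=\theta_k\theta_{k-1}$ yields
\[
R\,d\,(q+1) \ge \theta_k\theta_{k-1}\!\left(d - \tfrac{s}{c}\right),
\]
and dividing by $d(q+1)$ gives $R \ge \frac{\theta_k\theta_{k-1}}{q+1}\bigl(1 - \frac{s}{cd}\bigr)$.

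Finally, for the global count, I would use that any two distinct blocks in $\cS$ meet in a point, so no line can be contained in two blocks; hence each line of $\cL_\cS$ belongs to exactly one block. Summing the per-block bound over all $|\cS| = (1-s)\theta_k^2$ blocks therefore introduces no overcounting and yields the claimed total. The only mild obstacle is being careful with the direction of inequalities when moving the $(1-d)(q+1)L$ term across, but the algebra is routine.
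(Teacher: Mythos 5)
Your proof is correct and follows essentially the same route as the paper: the paper performs the identical double count of incidences between $c$-rich points and lines of $S_0$, merely phrased in terms of the number $\beta = L - R$ of poor lines rather than the number $R$ of rich lines. Your explicit justification of the global count (no line lies in two blocks, since two blocks meet in only a point) is a point the paper leaves implicit, and it is valid.
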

\begin{proof}
Consider a block $S_0 \in \cS$ and let $\beta$ denote the number of poor lines in $S_0$. By counting pairs ($P$, $l$), with $P$ a rich point in $S_0$, $l$ a line in $S_0$ and $P\in l$, we find:
\[
\left( \qbin {k+1}2 - \beta \right) (q + 1) + \beta (1 - d)(q + 1)%
\ge r_0 \theta_{k-1} = \left( 1 - \frac sc \right) \theta_k \theta_{k-1},
\]
which gives
\[
\beta \le \frac {s \theta_k \theta_{k-1}}{c d (q + 1)}.
\]
Hence, an element of $\cS$ contains at least $\qbin{k+1}{2}-\beta\geq \frac{\theta_{k}\theta_{k-1}}{q+1}-\frac {s \theta_k \theta_{k-1}}{c d (q + 1)}$ elements.
\end{proof}

\begin{commentaar}\label{opmerkingcs}
In order to get a useful bound in the previous lemma, we need values of $s, c$ and $d$ such that 
$s \leq c d$. Later we will see that the values that we use for $c$ and $d$ satisfy these inequalities.
\end{commentaar}

We continue with a lemma that will be useful to prove the Main Lemma and the theorems in the following section.

\begin{lemma}\label{lem:richlinesthroughtwo}
	The average number of $(c,d)$-rich lines meeting two distinct blocks $S_1,S_2$ 
 in a $c$-rich point different from $S_1\cap S_2$ is at least
\[
f(s) = \theta_k \theta_{k-1} q \frac {1 - d}{1 - s}%
\left( 1 - \frac s{cd} \right) \left( 1 - c - \frac 1{\theta_k} \right)^2 %
\left( 1 - d - \frac dq \right).
\]
\end{lemma}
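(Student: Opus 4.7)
The plan is to count the set $\mathcal{T}$ of triples $(\ell, S_1, S_2)$ with $\ell$ a $(c,d)$-rich line and $S_1 \neq S_2$ blocks each meeting $\ell$ in a $c$-rich point distinct from $S_1 \cap S_2$, and then to obtain the desired average by dividing $|\mathcal{T}|$ by the number $|\cS|(|\cS|-1)$ of ordered pairs of distinct blocks.

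A preliminary observation I would establish first is that every $\ell \in \cL_\cS$ lies in exactly one block of $\cS$: two blocks both containing $\ell$ would meet in at least $\ell$, contradicting the fact that distinct blocks of $\cS$ meet in a single point. To count $|\mathcal{T}|$ I would enlarge each triple to a quadruple $(\ell, P_1, P_2, S_1, S_2)$ with $P_i := \ell \cap S_i$; since $\ell \not\subseteq S_i$, each $P_i$ is a single $c$-rich point, and this is a bijection onto suitable quadruples. For a fixed rich line $\ell$, Definition \ref{defrijkpunt} gives more than $(1-d)(q+1)$ rich points on $\ell$, so the number of ordered pairs of distinct rich points on $\ell$ exceeds
\[
(1-d)(q+1)\bigl[(1-d)(q+1)-1\bigr] \;=\; (1-d)(q+1)\,q\,\bigl(1-d-\tfrac{d}{q}\bigr).
\]
For each such rich point $P_i$, there are more than $(1-c)\theta_k$ blocks through $P_i$; excluding the unique block that contains $\ell$ leaves more than $(1-c)\theta_k - 1 = \theta_k\bigl(1-c-\tfrac{1}{\theta_k}\bigr)$ blocks $S_i$ meeting $\ell$ only in $P_i$. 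The conditions $S_1 \neq S_2$ and $S_1\cap S_2 \notin \{P_1,P_2\}$ are automatic since $P_2 \notin S_1$ and $P_1 \notin S_2$ by construction.

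Each rich line therefore contributes more than $(1-d)(q+1)\,q\,(1-d-d/q) \cdot \theta_k^2\,(1-c-1/\theta_k)^2$ quadruples, while Lemma \ref{numberrichlines} supplies at least $(1-s)\theta_k^2 \cdot \frac{\theta_k\theta_{k-1}}{q+1}\bigl(1-\tfrac{s}{cd}\bigr)$ rich lines in total. Multiplying these and dividing by $|\cS|(|\cS|-1) \leq (1-s)^2\theta_k^4$, the factor $(q+1)$ cancels and the $\theta_k$ exponents align to yield exactly the claimed expression $f(s)$.

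The main obstacle I expect is the bookkeeping of the bijection between triples and quadruples: one must verify that the distinctness conditions $P_1 \neq P_2$, $S_1 \neq S_2$, and $S_i \neq S_1\cap S_2$ all follow automatically from the construction (rather than cutting down the count further), and that the one-block-per-line fact is used correctly so that the ``$-1$'' in $(1-c)\theta_k - 1$ is justified. Once these are pinned down, the argument is a direct aggregation of the bounds from Definition \ref{defrijkpunt} and Lemma \ref{numberrichlines}.
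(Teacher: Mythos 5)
Your proposal is correct and follows essentially the same route as the paper: count triples (rich line, $S_1$, $S_2$) by choosing a rich line via Lemma \ref{numberrichlines}, an ordered pair of distinct rich points on it, and a block through each point not containing the line, then divide by $|\cS|(|\cS|-1)\le(1-s)^2\theta_k^4$. Your extra care with the bijection and the automatic distinctness conditions is sound and matches what the paper's proof implicitly uses.
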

\begin{proof}
We count triples $(S_1,S_2,r)$ where $r$ is a rich line connecting
a rich point in $S_1\setminus S_2$ with a rich point in $S_2\setminus S_1$. Let $\rho_{\{S_1,S_2\}}$, $S_1,S_2\in \cS$, $S_1\neq S_2$, be the number of rich lines meeting both $S_1\setminus S_2$ and $S_2\setminus S_1$ in a rich point. We define $\rho(s)$ as the average of the values $\rho_{\{S_1,S_2\}}$ with $S_1,S_2\in \cS$ and $S_1\neq S_2$. On the one hand the number of triples equals
\[
(1 - s) \theta_k^2 \left( (1 - s) \theta_k^2 - 1 \right) \rho(s) \leq  (1 - s)^2 \theta_k^4 \cdot \rho(s)\;.
\]
On the other hand, the number of triples is at least 
\[
(1 - s) \theta_k^2  \frac {\theta_k\theta_{k-1}}{q+1}  \left( %
1 - \frac s{cd} \right) \cdot(1 - d)(q + 1)\cdot((1 - d)q - d)\cdot((1 - c) \theta_k - 1)^2\;,
\]
as by Lemma \ref{numberrichlines}, there are at least
$\ds (1 - s) \theta_k^2 \cdot \frac {\theta_k\theta_{k-1}}{q+1} \cdot \left( 1 - \frac s{cd} \right)$ rich lines, and on a rich line there are at least $(1 - d)(q + 1)((1 - d)q - d)$ possibilities for an ordered pair of two distinct rich points $P_1,P_2$. Through those points we find at least $((1 - c) \theta_k - 1)^2$ possibilities for the blocks $S_1$ and $S_2$ (not
containing the line $P_1P_2$). This gives that the average $\rho(s)$ is at least $f(s)$.
\end{proof}

\section{Main Lemma and results}\label{section3}

Using the combinatorial lemmas in the previous section, the main goal in this section is to find a an upper bound on $(1 - s)$, as a function of the field size $q$. We start with the Main Lemma, that will be the basis of the theorems at the end of this section.

\begin{mlemma}\label{mlemma}
Let $\cS$ be a $(k+1,1)$-SCID in $\PG(n,q)$ with $|\cS|=(1-s)\theta_k^2$, $k\geq 3$, that is not a sunflower. For all values $0 < s < c, d < 1$, we have the following inequality:
\begin{multline}\label{belangrijkevgl}
\left( 1 - \frac s{cd} \right) (1 - d) (1 - c) \left( 1 - c -%
\frac 1{q^3} \right)^2 \left( 1 - d - \frac dq \right) %
\left( 1 - d - \frac {1+d}q \right) q  \\
\le (1 - s)^2 + \frac {1 - s}{q}.
\end{multline}
\end{mlemma}
\begin{proof}
Consider a pair of different blocks $S_1,S_2$ such that there are at least $f(s)$ distinct $(c,d)$-rich lines connecting a point of $S_1\setminus S_2$ and a point of $S_2\setminus S_1$. Note that such a pair is guaranteed to exist by Lemma \ref{lem:richlinesthroughtwo}. Then, the $2k$-space $T=\langle S_1,S_2\rangle$ contains at least 
\[
f(s) \cdot \frac{(1 - d)(q + 1) - 2}{q} = \left(1 - d - \frac{1 + d}{q}\right) f(s)
\]
rich points, since every point $P$ in the $2k$-space $T$ not in the union $S_1\cup S_2$ lies on at most $q$ such connecting lines. Indeed, each such line is contained in the plane $\langle P, S_1\rangle\cap\langle P, S_2\rangle$, and in this plane there are $q$ lines through $P$ that do not contain $\cS_1 \cap \cS_2$. Each of these $q$ lines might or might not be rich. 

Since the dual of a $(k+1,1)$-SCID in a $2k$-space is a partial $(k-1)$-spread in this $2k$-space, we have that a $2k$-space contains at most $\lfloor \theta_{2k}/\theta_{k-1} \rfloor = q^{k+1} + q$ blocks. On the other hand, $T$ contains at most $\theta_{k-1}$ points from each block not contained in $T$. Hence the number of pairs $(P,S)$, with $P \in \langle S_1, S_2 \rangle$ a rich point in the block $S$ is at least $(1 - d - (1+d)/q) f(s) (1 - c)\theta_k$ and at most $(q^{k+1} + q) \theta_k + \left( (1 - s) \theta_k^2 - (q^{k+1} + q) \right) %
\theta_{k-1}$.
Hence
\begin{align*}
&\left(1 - d - \frac{1+d}{q}\right) (1 - c) f(s)\theta_k\\
&\qquad\qquad\leq 
(q^{k+1} + q) \theta_k + \left( (1 - s) \theta_k^2 - (q^{k+1} + q) \right)\theta_{k-1}\\
\Rightarrow\quad &\left(1 - d - \frac{1+d}{q}\right) (1 - c) \frac{f(s)}{\theta_k \theta_{k-1}} 
\leq 1-s+
\frac{q^{k+1} + q}{\theta_k^2 \theta_{k-1}}q^k \leq 1 - s + \frac 1{q^{k-2}}\;.
\end{align*}
The last inequality follows since $q^k (q^{k+1} + q) \le q^{2-k} \theta_k^2 \theta_{k-1}^{\phantom 2}$.
This implies that
\begin{multline}
\left( 1 - \frac s{cd} \right) (1 - d) (1 - c) \left( 1 - c -%
\frac 1{\theta_k} \right)^2 \left( 1 - d - \frac dq \right) %
\left( 1 - d - \frac {1+d}q \right) q  \\
\le (1 - s)^2 + \frac {1 - s}{q^{k-2}}\;,\label{blaqk}
\end{multline}
which proves the lemma since $k\geq 3$.\qedhere
\end{proof}

\begin{gevolg}\label{gevolg}
Let $\cS$ be a $(k+1,1)$-SCID in $\PG(n,q)$ with $|\cS|=(1-s)\theta_k^2$, $k\geq 3$, that is not a sunflower. 
Suppose that 
\begin{align*}
	\left(\frac{1}{q}-\frac{B(q,c,d)}{cd}\right)^2-4B(q,c,d)\left(\frac{1}{cd}-1\right)\geq 0.
\end{align*}
Then we have, for all values $0 < s < c, d < 1$, that
{\footnotesize\begin{align*}
	&(1-s)\leq F(q,c,d)=\frac{1}{2}\left( \frac{B(q,c,d)}{cd}-\frac{1}{q}-\sqrt{\left(\frac{1}{q}-\frac{B(q,c,d)}{cd}\right)^2-4B(q,c,d)\left(\frac{1}{cd}-1\right)}\right)\\ \text{ or } &(1-s)\geq G(q,c,d)=\frac{1}{2}\left( \frac{B(q,c,d)}{cd}-\frac{1}{q}+\sqrt{\left(\frac{1}{q}-\frac{B(q,c,d)}{cd}\right)^2-4B(q,c,d)\left(\frac{1}{cd}-1\right)}\right)
	\end{align*}}
with $B(q,c,d)=(1 - d) (1 - c) \left( 1 - c -%
\frac 1{q^3} \right)^2 \left( 1 - d - \frac dq \right) %
\left( 1 - d - \frac {1+d}q \right) q. $
\end{gevolg}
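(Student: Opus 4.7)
The plan is to recognise (\ref{belangrijkevgl}) as a quadratic inequality in the variable $x := 1-s$ and then solve it by the quadratic formula.

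Writing $B := B(q,c,d)$ for brevity, the Main Lemma reads
\begin{equation*}
B\left(1 - \frac{s}{cd}\right) \leq (1-s)^2 + \frac{1-s}{q}.
\end{equation*}
Using the identity $1 - \frac{s}{cd} = \frac{x}{cd} - \left(\frac{1}{cd} - 1\right)$, I rearrange this to
\begin{equation*}
x^2 + \left(\frac{1}{q} - \frac{B}{cd}\right)x + B\left(\frac{1}{cd} - 1\right) \geq 0.
\end{equation*}

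Next, I apply the quadratic formula to the polynomial $P(x) := x^2 + \left(\frac{1}{q} - \frac{B}{cd}\right)x + B\left(\frac{1}{cd} - 1\right)$. Its discriminant is precisely the quantity $\left(\frac{1}{q} - \frac{B}{cd}\right)^2 - 4B\left(\frac{1}{cd} - 1\right)$ that the hypothesis of the corollary assumes to be non-negative. Hence $P$ has two real roots, and matching signs in the quadratic formula identifies them as $F(q,c,d)$ (with the $-\sqrt{\,\cdot\,}$ branch) and $G(q,c,d)$ (with the $+\sqrt{\,\cdot\,}$ branch), with $F \leq G$.

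Finally, since the leading coefficient of $P$ is $1 > 0$, the parabola opens upward, so $P(x) \geq 0$ holds exactly on the complement of the open interval $(F,G)$, that is, when $x \leq F$ or $x \geq G$. Substituting back $x = 1-s$ gives the dichotomy stated in the corollary. The proof is a purely algebraic unpacking of the Main Lemma, so there is no genuine obstacle; the only care needed is in the sign bookkeeping when rearranging into standard quadratic form and in verifying that $F$ corresponds to the smaller root while $G$ corresponds to the larger one.
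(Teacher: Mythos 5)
Your proposal is correct and follows exactly the paper's argument: the paper likewise rewrites the Main Lemma's inequality as the quadratic $(1-s)^2+\left(\frac{1}{q}-\frac{B}{cd}\right)(1-s)+B\left(\frac{1}{cd}-1\right)\geq 0$ in $1-s$ and reads off the two-root dichotomy. Your write-up merely spells out the sign bookkeeping and the role of the discriminant hypothesis that the paper leaves implicit.
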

\begin{proof}
Using inequality (\ref{belangrijkevgl}) from the Main Lemma, we immediately find the following quadratic inequality \[(1-s)^2+\left(\frac{1}{q}-\frac{B(q,c,d)}{cd}\right)\cdot (1-s)+B(q,c,d)\left(\frac{1}{cd}-1\right)\geq 0,\]  
which proves the corollary.
\end{proof}

From now on, we put $c(q)=d(q)=1-\frac{1}{\sqrt[6]{q}}-\frac{1}{2\sqrt[3]{q}}$. Since $c$ and $d$ must be non negative by definition, we have to assume that $q\geq 7$. We denote $c(q), F(q,c(q),c(q)), G(q, c(q),c(q))$ and $B(q,c(q),c(q))$ by $c_q, F_q, G_q$ and $B_q$ respectively. We first give two lower bounds on $B_q$.

\begin{lemma}\label{lemmaonB}
	Let $t=\sqrt[6]{q}$, $q\geq 7$,  then 
	\begin{align}
		B_q &> \left(1+\frac{1}{2t}\right)^2 \left(1+\frac{1}{2t}-\frac{1}{t^4}\right)^2 \left(1+\frac{1}{2t}-\frac{1}{t^5}\right)\left(1+\frac{1}{2t}-\frac{2}{t^5}\right) \label{lemmaB1} \\
		B_q & > \left(1+\frac{1}{2t}\right)^2 \left( 1+\frac{1}{3t}  \right)^2. \label{lemmaB2}
	\end{align}
\end{lemma}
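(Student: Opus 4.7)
The plan is to substitute $c=d=c_q$ directly into $B(q,c,d)$ and factor a common $\frac{1}{t}$ out of each of the four $c_q$-dependent factors, where $t=\sqrt[6]{q}$. A routine computation gives
\begin{align*}
1-c_q &= \frac{1}{t}\left(1+\frac{1}{2t}\right), \\
1-c_q-\frac{1}{q^3} &= \frac{1}{t}\left(1+\frac{1}{2t}-\frac{1}{t^{17}}\right), \\
1-c_q-\frac{c_q}{q} &= \frac{1}{t}\left(1+\frac{1}{2t}-\frac{1}{t^5}+\frac{1}{t^6}+\frac{1}{2t^7}\right), \\
1-c_q-\frac{1+c_q}{q} &= \frac{1}{t}\left(1+\frac{1}{2t}-\frac{2}{t^5}+\frac{1}{t^6}+\frac{1}{2t^7}\right).
\end{align*}
The resulting six factors of $\frac{1}{t}$ cancel against $q=t^6$, so
\[
B_q = \left(1+\frac{1}{2t}\right)^2 \left(1+\frac{1}{2t}-\frac{1}{t^{17}}\right)^2 P_1 P_2,
\]
where $P_1, P_2$ denote the last two brackets above.

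For \eqref{lemmaB1} I would simply relax the negative corrections. Replacing $\frac{1}{t^{17}}$ by the larger $\frac{1}{t^4}$ weakens the squared factor (the bases remain positive for $q\geq 7$, so squaring preserves the inequality), and dropping the positive tail $\frac{1}{t^6}+\frac{1}{2t^7}$ from $P_1$ and $P_2$ weakens the product. The only care needed is to verify that the smallest remaining bracket $1+\frac{1}{2t}-\frac{2}{t^5}$ is still positive for $q\geq 7$, which is a short numerical check.

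For \eqref{lemmaB2} the looser inequality \eqref{lemmaB1} turns out to be insufficient at small $q$, so instead I would keep $P_1 P_2$ intact and prove $P_1 P_2 \geq 1$ for $t\geq 1$. Since $P_1 > P_2$ trivially, this reduces to showing $P_2 \geq 1$; clearing denominators (multiplying by $2t^7$) turns this into $t^6-4t^2+2t+1 \geq 0$, and I expect the clean factorization
\[
t^6-4t^2+2t+1 = (t-1)^2 \left(t^4+2t^3+3t^2+4t+1\right),
\]
obtainable by polynomial division, which is manifestly non-negative. Combined with the inequality $1+\frac{1}{2t}-\frac{1}{t^{17}} \geq 1+\frac{1}{3t}$ (equivalent to $t^{16}\geq 6$, clearly valid for $q\geq 7$) this yields \eqref{lemmaB2}.

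The only real obstacle is bookkeeping: four factors expanded in powers of $1/t$, with many small terms to track. The conceptual step I would be most careful about is spotting the $(t-1)^2$ factorization used for \eqref{lemmaB2}, since the naive attempt to derive \eqref{lemmaB2} from \eqref{lemmaB1} fails for $q$ close to $7$ and so the two bounds must be proved independently from the exact expression for $B_q$.
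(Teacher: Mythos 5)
Your proposal is correct and follows the same route as the paper: both rewrite $B_q$ exactly as $\left(1+\frac{1}{2t}\right)^2\left(1+\frac{1}{2t}-\frac{1}{t^{17}}\right)^2 P_1P_2$ by factoring $\frac{1}{t}$ out of each bracket, and then verify the two stated lower bounds from that expression (the paper merely asserts that the final inequalities "can be checked", whereas you supply the explicit monotone relaxations, the reduction of \eqref{lemmaB2} to $P_2\geq 1$, and the factorization $t^6-4t^2+2t+1=(t-1)^2(t^4+2t^3+3t^2+4t+1)$, all of which I have verified). Your observation that \eqref{lemmaB2} cannot be deduced from \eqref{lemmaB1} near $q=7$ is also accurate, so treating the two bounds independently is the right call.
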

\begin{proof}
	By using the equality $c_q=c_{t^6}=1-\frac{1}{t}-\frac{1}{2t^2}$ and $t\geq \sqrt[6]{7}$, we have
	\begin{align*}
B_q&=(1 - c_q)^2 \left( 1 - c_q -\frac 1{q^3} \right)^2 \left( 1 - c_q - \frac{c_q}{q} \right) \left( 1 - c_q - \frac {1+c_q}q \right) q\\
&=\left(\frac{1}{t}+\frac{1}{2t^2}\right)^2 \left(\frac{1}{t}+\frac{1}{2t^2} -\frac{1}{t^{18}} \right)^2 \left(\frac{1}{t}+\frac{1}{2t^2}-\frac{1}{t^6}+\frac{1}{t^7}+\frac{1}{2t^8} \right)\\
&\qquad\qquad \left( \frac{1}{t}+\frac{1}{2t^2}- \frac{2}{t^6}+\frac{1}{t^7}+\frac{1}{2t^8} \right) t^6\\
&=\left(1+\frac{1}{2t}\right)^2 \left( 1+\frac{1}{2t} -\frac{1}{t^{17}} \right)^2 \left( 1+\frac{1}{2t}- \frac{1}{t^5}+\frac{1}{t^6}+\frac{1}{2t^7} \right)\\ 
&\qquad\qquad\left( 1+\frac{1}{2t}- \frac{2}{t^5}+\frac{1}{t^6}+\frac{1}{2t^7} \right)\;.
\end{align*}
Using this expression for $B_q$, we can check that the following two inequalities are true for all $t\geq \sqrt[6]{7}$, and so, for all $q\geq 7$. 
\begin{align*}
B_q &> \left(1+\frac{1}{2t}\right)^2 \left(1+\frac{1}{2t}-\frac{1}{t^4}\right)^2 \left(1+\frac{1}{2t}-\frac{1}{t^5}\right)\left(1+\frac{1}{2t}-\frac{2}{t^5}\right), \ \text{and} \\
B_q &> \left(1+\frac{1}{2t}\right)^2 \left( 1+\frac{1}{3t}  \right)^2\;. \qedhere
\end{align*}
\end{proof}

We continue by investigating the condition $\left(\frac{1}{q}-\frac{B_q}{c_q^2}\right)^2-4B_q\left(\frac{1}{c_q^2}-1\right)\geq 0$ from Corollary \ref{gevolg}, proving for which values of $q\geq 7$ it is valid. Or equivalently, for which values of $q$, the argument of the square root in $F_q$ and $G_q$ is non-negative. 

\begin{lemma}\label{onderwortel}
	For $q\geq 7$ it is true that $\left(\frac{1}{q}-\frac{B_q}{c_q^2}\right)^2-4B_q\left(\frac{1}{c_q^2}-1\right)\geq 0$, with $B_q= (1 - c_q)^2 \left( 1 - c_q -\frac 1{q^3} \right)^2 \left( 1 - c_q - \frac{c_q}{q} \right)\left( 1 - c_q - \frac {1+c_q}q \right) q$ and $c_q=1-\frac{1}{\sqrt[6]{q}}-\frac{1}{2\sqrt[3]{q}}$.
\end{lemma}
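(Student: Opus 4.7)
The plan is to reduce the required discriminant inequality to a much cleaner sufficient condition that removes the explicit dependence on $c_q$, and then close using Lemma \ref{lemmaonB}.

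First I observe that, by Lemma \ref{lemmaonB}, $B_q > 1$, while trivially $c_q < 1$, so $\frac{B_q}{c_q^2} - \frac{1}{q} > 0$ and both sides of the target inequality are non-negative. Multiplying through by $c_q^4$ rewrites it as $(B_q - c_q^2/q)^2 \geq 4 c_q^2 B_q (1 - c_q^2)$. Expanding and regrouping, the difference between the two sides equals
\[
B_q\bigl[B_q - 4c_q^2(1-c_q^2) - 2c_q^2/q\bigr] + \frac{c_q^4}{q^2}.
\]
Since $c_q^4/q^2 > 0$ and $B_q > 0$, it therefore suffices to show the bracketed quantity is non-negative, i.e.\ $B_q \geq 4c_q^2(1-c_q^2) + 2c_q^2/q$.

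Next, treating the right-hand side as a function of $x := c_q^2 \in [0,1]$, the parabola $g(x) = 4x(1-x) + 2x/q$ attains its maximum at $x = \frac{1}{2} + \frac{1}{4q}$, with value $1 + \frac{1}{q} + \frac{1}{4q^2}$. It therefore suffices to prove $B_q \geq 1 + \frac{1}{q} + \frac{1}{4q^2}$. Applying bound (\ref{lemmaB2}) from Lemma \ref{lemmaonB} and writing $t = \sqrt[6]{q}$, I expand $(1+\frac{1}{2t})(1+\frac{1}{3t}) = 1 + \frac{5}{6t} + \frac{1}{6t^2}$, square, and drop the positive cross terms to obtain $B_q > 1 + \frac{5}{3t} + \frac{1}{3t^2}$. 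Substituting $q = t^6$, the required inequality reduces to the two elementary estimates $\frac{5}{3t} \geq \frac{1}{t^6}$ (equivalent to $5t^5 \geq 3$) and $\frac{1}{3t^2} \geq \frac{1}{4t^{12}}$ (equivalent to $4t^{10} \geq 3$), both of which hold for $t \geq \sqrt[6]{7} > 1$.

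The main obstacle is the very first step: identifying a sufficient inequality clean enough that it compares $B_q$ to a pure function of $q$ while only losing a negligible amount (the positive $c_q^4/q^2$ remainder). Once the maximum of $g$ on $[0,1]$ is computed, everything else is a routine polynomial comparison in $t$, and Lemma \ref{lemmaonB} provides exactly the $O(1/t)$-order correction needed to dominate the threshold $1$ with plenty of room to spare.
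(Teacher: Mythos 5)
Your proof is correct. Up to the first reduction it coincides with the paper's argument: the paper also discards the $1/q^2$ term and arrives (via contraposition) at exactly your sufficient condition $B_q \geq 4c_q^2(1-c_q^2) + 2c_q^2/q$, and both proofs then invoke bound (\ref{lemmaB2}) of Lemma \ref{lemmaonB}. Where you diverge is the endgame: the paper substitutes $c_q = 1 - \tfrac{1}{t} - \tfrac{1}{2t^2}$ explicitly, enlarges the right-hand side to $2\left(1-\tfrac{1}{t}\right)^2\left(\tfrac{4}{t}-\tfrac{1}{t^3}\right)$, and verifies a degree-four polynomial inequality in $t$ with fractional coefficients. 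You instead observe that $4x(1-x)+2x/q$ is a downward parabola in $x = c_q^2$ with maximum $1 + \tfrac{1}{q} + \tfrac{1}{4q^2}$, which eliminates $c_q$ entirely and reduces the problem to the transparent comparison $1 + \tfrac{5}{3t} + \tfrac{1}{3t^2} \geq 1 + \tfrac{1}{t^6} + \tfrac{1}{4t^{12}}$. Your route is slightly lossier (you give away the gap between $c_q^2$ and the parabola's vertex) but it is cleaner to verify, and it shows the discriminant condition actually holds for \emph{any} choice of $c = d \in (0,1)$ once $B_q \geq 1 + \tfrac{1}{q} + \tfrac{1}{4q^2}$, which is a mild gain in generality over the paper's computation.
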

\begin{proof}
	Note that it follows from Lemma \ref{lemmaonB} that $B_q>0$ if $q\geq 7$ (we will use this on the third line). Suppose that the inequality in the statement of the lemma does not hold.
	Then we have
	\begin{align*}
	&&& \hspace{-1cm}\frac{B_q^2}{c_q^4}-\frac{2B_q}{q c_q^2}+\frac{1}{q^2}< 4B_q\left(\frac{1}{c_q^2}-1\right) \\
	&\Rightarrow&& \hspace{-1cm} \frac{B_q^2}{c_q^4}< 2B_q\left(\frac{2}{c_q^2}-2+\frac{1}{q c_q^2}\right) \\
	&\xLeftrightarrow{B_q> 0}&&\hspace{-1cm}  B_q< 2c_q^2\left(2(1-c_q^2)+\frac{1}{q}\right) \\
	&\xLeftrightarrow{t=\sqrt[6]{q}}&&\hspace{-1cm} B_{t^6} <2\left(1-\frac{1}{t}-\frac{1}{2t^2}\right)^2\left( \frac{4}{t}-\frac{2}{t^3}-\frac{1}{2t^4} +\frac{1}{t^6}\right)\\	
	& \xRightarrow{(\ref{lemmaB2})}&&\hspace{-1cm}  \left(1+\frac{1}{2t}\right)^2 \left( 1+\frac{1}{3t}  \right)^2  <2\left(1-\frac{1}{t}\right)^2\left( \frac{4}{t}-\frac{1}{t^3}\right) \\
	& \Leftrightarrow&&\hspace{-1cm}  \left(t+\frac 12\right)^2\left(t+\frac 13\right)^2<2\left(t-1\right)^2\left( 4t-\frac{1}{t}\right) \\
	& \Leftrightarrow &&\hspace{-1cm} t^4+\frac{5}{3}t^3+\frac{37}{36}t^2+\frac{5}{18}t+\frac{1}{36}<8t^3-16t^2+6t+4-\frac{2}{t}\\
	& \Leftrightarrow &&\hspace{-1cm} t^4-\frac{19}{3}t^3+\frac{613}{36}t^2-\frac{103}{18}t-\frac{143}{36}+\frac{2}{t}<0. 
	\end{align*}
	The last inequality gives a contradiction for all values of $t\geq \sqrt[6]{7}$, and so for all $q\geq 7$,  which proves the lemma.
\end{proof}

Now we prove that $G_q> 1$. This implies that the first bound in Corollary \ref{gevolg} holds,
since $0<s<1$.
	
\begin{lemma}\label{boundonG}
	For $q\geq 7$, it is true that 
	\begin{align*}
		G_q=\frac{1}{2}\left(\frac{B_q}{c_q^2}-\frac{1}{q}+\sqrt{\left(\frac{1}{q}-\frac{B_q}{c_q^2}\right)^2-4B_q\left(\frac{1}{c_q^2}-1\right)}\right)> 1
	\end{align*}
	with
	\begin{align*}
		B_q&= (1 - c_q)^2 \left( 1 - c_q -%
		\frac 1{q^3} \right)^2 \left( 1 - c_q - \frac{c_q}{q} \right) \left( 1 - c_q - \frac {1+c_q}q \right) q\ \text{ and} \\ c_q&=1-\frac{1}{\sqrt[6]{q}}-\frac{1}{2\sqrt[3]{q}}\;.
	\end{align*}
\end{lemma}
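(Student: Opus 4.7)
The plan is to reduce the strict inequality $G_q>1$ to the simple arithmetic condition $B_q > 1 + 1/q$, and then verify that condition using inequality~(\ref{lemmaB2}) of Lemma~\ref{lemmaonB}. Setting $\alpha := B_q/c_q^2$, so that $4B_q(1/c_q^2-1)=4\alpha(1-c_q^2)$, we have
\[
2G_q \;=\; \alpha - \frac{1}{q} + \sqrt{A}, \qquad A \;:=\; \left(\frac{1}{q}-\alpha\right)^{2} - 4\alpha(1-c_q^2).
\]
By Lemma~\ref{onderwortel}, $A\ge 0$, so $\sqrt{A}$ is real, and $G_q>1$ is equivalent to $\sqrt{A} > 2 - \alpha + 1/q$.

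If $2 - \alpha + 1/q < 0$, the inequality is immediate; otherwise both sides are non-negative and we may square. An elementary expansion, using $\alpha c_q^2 = B_q$ and cancelling the $\alpha^2$, $1/q^2$ and $-2\alpha/q$ terms between the two squares, yields the clean identity
\[
A \;-\; \left(2 - \alpha + \frac{1}{q}\right)^{2} \;=\; 4\left(B_q - 1 - \frac{1}{q}\right).
\]
Hence, in both sign cases, the desired inequality $G_q>1$ reduces to showing $B_q > 1 + 1/q$.

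For this, I invoke inequality~(\ref{lemmaB2}) with $t=\sqrt[6]{q}$, namely $B_q > (1 + 1/(2t))^2(1 + 1/(3t))^2$. Multiplying out, $(1 + 1/(2t))(1 + 1/(3t)) = 1 + 5/(6t) + 1/(6t^2)$, and squaring yields $B_q > 1 + 5/(3t) + (\text{strictly positive lower-order terms})$, while $1 + 1/q = 1 + 1/t^6$. It therefore suffices to check $5/(3t) \ge 1/t^6$, i.e.\ $t^5 \ge 3/5$, which is trivial for $t\ge\sqrt[6]{7} > 1$.

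The main obstacle is the squaring step, where one must be careful about the sign of $Y := 2 - \alpha + 1/q$. The behaviour is genuinely bimodal: for $q$ close to $7$, $c_q$ is very small, so $\alpha = B_q/c_q^2$ is enormous and one sits in the trivial regime $Y < 0$; for large $q$ one has $c_q, B_q \to 1$ and $\alpha \to 1$, placing one firmly in the squaring regime. What makes the argument clean is that both regimes collapse to the same elementary inequality $B_q > 1 + 1/q$, which (\ref{lemmaB2}) was designed to deliver.
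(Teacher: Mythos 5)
Your proof is correct and follows essentially the same route as the paper's: the same case split on the sign of $2-\frac{B_q}{c_q^2}+\frac{1}{q}$, the same algebraic cancellation after squaring that reduces everything to $B_q>1+\frac{1}{q}$, and the same appeal to inequality (\ref{lemmaB2}). The only (cosmetic) difference is in the final verification, where you bound $\left(1+\frac{1}{2t}\right)^2\left(1+\frac{1}{3t}\right)^2$ below by $1+\frac{5}{3t}$ rather than expanding the quartic in $t$ as the paper does.
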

\begin{proof}
	We have to prove that 
	\begin{align*}
	\frac{B_q}{c_q^2}-\frac{1}{q}+\sqrt{\left(\frac{1}{q}-\frac{B_q}{c_q^2}\right)^2-4B_q\left(\frac{1}{c_q^2}-1\right)}> 2\;.
	\end{align*}
	For all values of $q\geq 7$ such that $2-\frac{B_q}{c_q^2}+\frac{1}{q}<0$, the previous inequality is true. If $2-\frac{B_q}{c_q^2}+\frac{1}{q}\geq 0$, then it is equivalent to proving that
	\begin{align*}
		&\quad \left(\frac{1}{q}-\frac{B_q}{c_q^2}\right)^2-4B_q\left(\frac{1}{c_q^2}-1\right) > 4+4\left(\frac{1}{q}-\frac{B_q}{c_q^2}\right)+\left(\frac{1}{q}-\frac{B_q}{c_q^2}\right)^2\\
		\Leftrightarrow& \quad -\frac{B_q}{c_q^2}+B_q > 1+\frac{1}{q}-\frac{B_q}{c_q^2}\\
		\Leftrightarrow & \quad B_q > 1+\frac{1}{q}\;.
	\end{align*}
	Set $t=\sqrt[6]{q}$. From Lemma \ref{lemmaonB}(\ref{lemmaB2}), we know that it is sufficient to prove the following inequality: 
	\begin{align*}
		&\quad \left(1+\frac{1}{2t}\right)^2 \left( 1+\frac{1}{3t} \right)^2  > 1+\frac{1}{t^6}\\
		    \Leftrightarrow &\quad  t^4+\frac{5}{3}t^3+\frac{37}{36}t^2+\frac{5}{18}t+\frac{1}{36}> t^4+\frac{1}{t^2}\\
		    \Leftrightarrow & \quad \frac{5}{3}t^3+\frac{37}{36}t^2+\frac{5}{18}t+\frac{1}{36}-\frac{1}{t^2}>0\;.
	\end{align*}
	This last inequality is true for $t\geq \sqrt[6]{7}$, and so for $q\geq 7$, which proves the lemma.
\end{proof}
	
\begin{theorem}\label{maintheorem}
	A $(k+1,1)$-SCID in $\PG(n,q)$, $k\geq 3, q\geq 7$,  that has more than $F_q\theta_k^2$ elements, is a sunflower, with
	\begin{align*}
	 F_q \nonumber  = \frac{1}{2} \left(
	\frac{B_q}{c_q^2}-\frac{1}{q}-\sqrt{\left(\frac{1}{q}-\frac{B_q}{c_q^2}\right)^2-4B_q\left(\frac{1}{c_q^2}-1\right)} \right)
	\end{align*}
	and 
\begin{align*}
	B_q&= (1 - c_q)^2 \left( 1 - c_q -%
	\frac 1{q^3} \right)^2 \left( 1 - c_q - \frac{c_q}{q} \right) \left( 1 - c_q - \frac {1+c_q}q \right) q\;, \\ c_q&=1-\frac{1}{\sqrt[6]{q}}-\frac{1}{2\sqrt[3]{q}}\;.
	\end{align*}
	In particular, we have that a $(k+1,1)$-SCID in $\PG(n,q)$, with more than $\left(\frac{2}{\sqrt[6]{q}}+\frac{4}{\sqrt[3]{q}}-\frac{5}{\sqrt{q}}\right)\theta_k^2$ elements is a sunflower. 
\end{theorem}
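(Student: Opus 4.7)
The plan is to assemble the results already proved. Corollary \ref{gevolg} gives the dichotomy $(1-s)\leq F_q$ or $(1-s)\geq G_q$, Lemma \ref{onderwortel} ensures that the radicand is non-negative so that $F_q,G_q$ are real, and Lemma \ref{boundonG} rules out the second alternative because $G_q>1$. This immediately proves the first statement by contraposition. The real work is the second, cleaner bound, which requires showing that the smaller root $F_q$ is at most $h_q:=\frac{2}{\sqrt[6]{q}}+\frac{4}{\sqrt[3]{q}}-\frac{5}{\sqrt{q}}$.

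In detail: assume $\cS$ is a $(k+1,1)$-SCID of size $(1-s)\theta_k^2$ with $s\in(0,1)$ that is not a sunflower. For $q\geq 7$, Lemma \ref{onderwortel} guarantees that Corollary \ref{gevolg} applies and yields $(1-s)\leq F_q$ or $(1-s)\geq G_q$. Since $G_q>1$ by Lemma \ref{boundonG} and $(1-s)<1$, the second alternative is impossible; hence $|\cS|=(1-s)\theta_k^2\leq F_q\theta_k^2$, and any SCID exceeding this bound must be a sunflower.

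For the second claim, $F_q$ is the smaller root of
\[
Q(X)=X^2-\left(\tfrac{B_q}{c_q^2}-\tfrac{1}{q}\right)X+B_q\!\left(\tfrac{1}{c_q^2}-1\right),
\]
so $F_q\leq h_q$ is implied by $Q(h_q)\leq 0$, which, after multiplying by $c_q^2$, rearranges to
\[
c_q^2\!\left(h_q^2+\tfrac{h_q}{q}\right)\leq B_q\bigl(h_q+c_q^2-1\bigr).
\]
Setting $t=\sqrt[6]{q}$ and substituting $c_q=1-\tfrac{1}{t}-\tfrac{1}{2t^2}$ turns both sides into explicit rational functions of $t$. The identity $1-c_q^2=\tfrac{2}{t}-\tfrac{1}{t^3}-\tfrac{1}{4t^4}$ gives $h_q+c_q^2-1=\tfrac{4}{t^2}-\tfrac{4}{t^3}+\tfrac{1}{4t^4}$, which is positive for $t\geq\sqrt[6]{7}$, so the right-hand side of the inequality to be proved is positive. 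Replacing $B_q$ by the factored lower bound of Lemma \ref{lemmaonB}\eqref{lemmaB1} reduces the desired inequality to a single polynomial inequality in $t$, to be checked on $[\sqrt[6]{7},\infty)$.

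I expect the main obstacle to be precisely that polynomial inequality. The choice of $h_q$ matches the leading behaviour of $F_q$ to order $t^{-3}$, so after clearing denominators the leading terms cancel substantially and only a small positive remainder survives; this is presumably why the sharper factored bound \eqref{lemmaB1} is needed rather than the cruder \eqref{lemmaB2} that was enough for Lemmas \ref{onderwortel} and \ref{boundonG}. Once the inequality has been reduced to $P(t)\geq 0$ for a polynomial $P$ of modest degree, I would finish in the same style as the proof of Lemma \ref{onderwortel}: evaluate $P$ at $t=\sqrt[6]{7}$ and control its sign on $[\sqrt[6]{7},\infty)$ by a derivative argument, or else rewrite $P$ as a manifestly non-negative combination of basic polynomials.
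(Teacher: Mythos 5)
Your proposal is correct and follows essentially the same route as the paper: assemble Corollary \ref{gevolg} with Lemmas \ref{onderwortel} and \ref{boundonG} to rule out the $G_q$ branch, then reduce $F_q\leq h_q$ to the inequality $c_q^2\left(h_q^2+\tfrac{h_q}{q}\right)\leq B_q\left(h_q+c_q^2-1\right)$ and verify it via the lower bound of Lemma \ref{lemmaonB}\eqref{lemmaB1} after substituting $t=\sqrt[6]{q}$. The only cosmetic difference is that you obtain this condition as ``$Q(h_q)\leq 0$ for the quadratic whose smaller root is $F_q$,'' whereas the paper manipulates the explicit square-root expression for $F_q$ with a sign case distinction; both land on the same final polynomial inequality in $t$.
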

\begin{proof} 
	From Corollary \ref{gevolg}, Lemma \ref{onderwortel} and Lemma \ref{boundonG},  we know that $F_q \theta_k^2$ gives an upper bound on the size $|\cS|=(1-s)\theta_k^2$ of a $(k+1,1)$-SCID, with $\cS$ not a sunflower.  Hence, a $(k+1,1)$-SCID with more than $F_q \theta_k^2$ elements is a sunflower.	
	We have to prove that 
	\begin{align*} & \quad F_q\leq \frac{2}{\sqrt[6]{q}}+\frac{4}{\sqrt[3]{q}}-\frac{5}{\sqrt{q}}\\
	\Leftrightarrow &  \quad \frac{B_q}{c_q^2}-\frac{1}{q}-\sqrt{\left(\frac{1}{q}-\frac{B_q}{c_q^2}\right)^2-4B_q\left(\frac{1}{c_q^2}-1\right)} \leq \frac{4}{\sqrt[6]{q}}+\frac{8}{\sqrt[3]{q}}-\frac{10}{\sqrt{q}}
	\end{align*}
	If $\frac{B_q}{c_q^2}-\frac{1}{q}-\frac{4}{\sqrt[6]{q}}-\frac{8}{\sqrt[3]{q}}+\frac{10}{\sqrt{q}}\leq0$, then this is true for all values of $q\geq 7$. If $\frac{B_q}{c_q^2}-\frac{1}{q}-\frac{4}{\sqrt[6]{q}}-\frac{8}{\sqrt[3]{q}}+\frac{10}{\sqrt{q}}> 0$, then it is equivalent to proving that 
	\begin{align*}
	&&& \left(\frac{1}{q}-\frac{B_q}{c_q^2}\right)^2-4B_q\left(\frac{1}{c_q^2}-1\right) \\ &&&\qquad \geq \left(\frac{4}{\sqrt[6]{q}}+\frac{8}{\sqrt[3]{q}}-\frac{10}{\sqrt{q}}\right)^2+2\left(\frac{4}{\sqrt[6]{q}}+\frac{8}{\sqrt[3]{q}}-\frac{10}{\sqrt{q}}\right)\left(\frac{1}{q}-\frac{B_q}{c_q^2}\right) \\ &&&\qquad\qquad +\left(\frac{1}{q}-\frac{B_q}{c_q^2}\right)^2 \\
	&\Leftrightarrow &&  B_q\left(-\frac{1}{c_q^2}+1+\frac{1}{c_q^2}\left(\frac{2}{\sqrt[6]{q}}+\frac{4}{\sqrt[3]{q}}-\frac{5}{\sqrt{q}}\right)\right) \\ &&&\qquad \geq \left(\frac{2}{\sqrt[6]{q}}+\frac{4}{\sqrt[3]{q}}-\frac{5}{\sqrt{q}}\right)^2+\frac 1q\left(\frac{2}{\sqrt[6]{q}}+\frac{4}{\sqrt[3]{q}}-\frac{5}{\sqrt{q}}\right)\\
	&\Leftrightarrow &&  B_q\left(c_q^2-1+\frac{2}{\sqrt[6]{q}}+\frac{4}{\sqrt[3]{q}}-\frac{5}{\sqrt{q}}\right)\\ &&&\qquad \geq c_q^2 \left( \left(\frac{2}{\sqrt[6]{q}}+\frac{4}{\sqrt[3]{q}}-\frac{5}{\sqrt{q}}\right)^2+\frac 1q\left(\frac{2}{\sqrt[6]{q}}+\frac{4}{\sqrt[3]{q}}-\frac{5}{\sqrt{q}}\right) \right)\\
	&\xLeftrightarrow{t=\sqrt[6]{q}} &&  B_{t^6}\left(\frac{1}{4t^4}+\frac{4}{t^2}-\frac{4}{t^3}\right)\\
	&&&\qquad\geq \left(1-\frac{1}{t}-\frac{1}{2t^2}\right)^2 \left( \left(\frac{2}{t}+\frac{4}{t^2}-\frac{5}{t^3}\right)^2+\frac{1}{t^6}\left(\frac{2}{t}+\frac{4}{t^2}-\frac{5}{t^3}\right) \right)\;.
	\end{align*}
	Due to Lemma \ref{lemmaonB}$(\ref{lemmaB1})$, it is sufficient to prove that
	\begin{multline*}
 		\left(1+\frac{1}{2t}\right)^2 \left(1+\frac{1}{2t}-\frac{1}{t^4}\right)^2 \left(1+\frac{1}{2t}-\frac{1}{t^5}\right)\left(1+\frac{1}{2t}-\frac{2}{t^5}\right)\left(\frac{1}{4t^4}+\frac{4}{t^2}-\frac{4}{t^3}\right) \\\geq \left(1-\frac{1}{t}-\frac{1}{2t^2}\right)^2 \left( \left(\frac{2}{t}+\frac{4}{t^2}-\frac{5}{t^3}\right)^2+\frac{1}{t^6}\left(\frac{2}{t}+\frac{4}{t^2}-\frac{5}{t^3}\right) \right),	
 	\end{multline*}
 	equivalently that
 	\begin{multline*}
 	\frac{157}{4t^4} +\frac{95}{4t^5} 
 	-\frac{2165}{16t^6} +\frac{173}{8 t^7}+\frac{1411}{64t^8}+\frac{383}{64t^9}+\frac{1313}{256t^{10}} \\+ \frac{69}{2t^{11}} + \frac{1177}{32 t^{12}} -\frac{37}{8t^{13}} -\frac{3315}{128 t^{14}}  -\frac{219}{8t^{15}}-\frac{1631}{64t^{16}}+\frac{3}{32 t^{17}}\\+\frac{557}{32 t^{18}}+ \frac{151}{16 t^{19}} +\frac{293}{32 t^{20}} -\frac{1}{8t^{21}}-\frac{11}{2t^{22}}-\frac{3}{2 t^{23}}+\frac{1}{8 t^{24}}\geq 0\;.
 	\end{multline*}
 	This inequality is true for all $t\geq \sqrt[6]{7}$, and so for  $q\geq 7$. So, a $(k+1,1)$-SCID in $\PG(n,q)$, with at least $\left(\frac{2}{\sqrt[6]{q}}+\frac{4}{\sqrt[3]{q}}-\frac{5}{\sqrt{q}}\right)\theta_k^2$ elements, has more than $F_q\theta_k^2$ elements. This implies that this SCID is a sunflower, which proves the theorem. 
\end{proof}

Note that the bound $1-s \leq \frac{2}{\sqrt[6]{q}}+\frac{4}{\sqrt[3]{q}}-\frac{5}{\sqrt{q}}$ only gives an improvement for the sunflower bound for $q\geq 473$, and so, it is useful for large values of $q$. For $q\geq 473$, this bound is also an improvement on the bound in \cite{sunflowern-2}. For fixed, smaller values of $q$ an improved sunflower bound can be found by investigating the bound $1-s \leq F_q$. This bound gives an improvement on the sunflower bound if $F_q<1-\frac{1}{\theta_k}+\frac{1}{\theta_k^2}$.  For $k=3$ and $k=4$, this is the case for $q\geq 9$ and $q\geq 8$ respectively. For $k>4$ we have that $F_q<1-\frac{1}{\theta_k}+\frac{1}{\theta_k^2}$, if $F_q<1-\frac{1}{\theta_5}$, which is the case for $q\geq 7$. For these values of $q$ and $k$, the bound $1-s\leq F_q$ improves the bound in \cite{sunflowern-2}.

 \begin{table}[h!] \centering
 	\begin{tabular}{|c|c|c|}
 		\hline 
 		$q$ & $F_q$& $\frac{2}{\sqrt[6]{q}}+\frac{4}{\sqrt[3]{q}}-\frac{5}{\sqrt{q}}$\\  \hline
 		$2^4$ & 0.97698136 & 1.59732210 \\
 		$2^6$ & 0.89046942 & 1.37500000 \\
 		$2^8$ & 0.78319928 & 1.11116105\\
 		$2^{10}$ & 0.67282525 &0.87056078  \\
 		$2^{12}$ & 0.56493296 &0.67187500\\
 		$2^{14}$ & 0.46301281 & 0.51527789\\
 		$2^{16}$ & 0.37118406 & 0.39466158\\
 		$2^{18}$ & 0.29280283 & 0.30273438 \\
 		$2^{20}$ & 0.22886576 & 0.23291485 \\
 		\hline 
 	\end{tabular}\caption{Upper bound $F_q$ and $\frac{2}{\sqrt[6]{q}}+\frac{4}{\sqrt[3]{q}}-\frac{5}{\sqrt{q}}$ on $(1-s)$ for specific values of $q$.}\label{tabel}
 \end{table}
 
In Table \ref{tabel} we give the values of the upper bound $F_q$ and $\frac{2}{\sqrt[6]{q}}+\frac{4}{\sqrt[3]{q}}-\frac{5}{\sqrt{q}}$ on $(1-s)$,  for some specific values $q$. The values in this table confirm that the bound $\frac{2}{\sqrt[6]{q}}+\frac{4}{\sqrt[3]{q}}-\frac{5}{\sqrt{q}}$ is a good approximation for $F_q$ for large values of $q$. 

Note that for fixed values of $k$ and $q$ there is a possibility to find a slightly better bound than the bound $F_q$, by using our techniques. Given the fixed values for $k$ and $q$ in inequality $(\ref{blaqk})$, we can choose the values of $c$ and $d$ such that we get the optimal bound for $1-s$. We describe this technique in the example below.
\begin{example}
	Suppose that $q=2^8=256$ and $k=5$, then we find from $(\ref{blaqk})$, that 
	\begin{align*}
		&\left( 1 - \frac s{cd} \right) (1 - d) (1 - c) \left( 1 - c -%
		\frac 1{\theta_5} \right)^2 \left( 1 - d - \frac{d}{2^8} \right) %
		\left( 1 - d - \frac {1+d}{2^8} \right) 2^8  \\ &\hspace{8.4cm}
		\leq (1 - s)^2 + \frac {1 - s}{2^{24}}, 
		\\ &\Leftrightarrow\left( 1 - \frac s{cd} \right) B(c,d)
		\leq (1 - s)^2 + \frac {1 - s}{2^{24}}\\ &\Leftrightarrow (1-s)^2+(1-s) \left( \frac{1}{2^{24}} - \frac{B(c,d)}{cd} \right) -\left(1- \frac{1}{cd}\right)B(c,d)
		\geq 0 \\ &\Leftrightarrow (1-s)\leq\frac{1}{2}\left( \frac{B(c,d)}{cd}-\frac{1}{2^{24}}-\sqrt{\left(\frac{1}{2^{24}}-\frac{B(c,d)}{cd}\right)^2-4\left(\frac{1}{cd}-1\right)B(c,d)}\right)
	\end{align*}
	with $B(c,d)=(1 - d) (1 - c) \left( 1 - c -%
	\frac {1}{\theta_5(2^8)} \right)^2 \left( 1 - d - \frac{d}{2^8} \right) %
	\left( 1 - d - \frac {1+d}{2^8} \right) 2^8. $
	By using a computer algebra package, we find a very good bound on $1-s$ for $c=0.53152285$ and $d=0.5294$. For these values we find the bound $1-s\leq 0.7825095$. Hence this gives a small improvement on the bound $1-s\leq F_q=0.78319928$, for which we used $c(2^8)=d(2^8)=0.5244047$.
	Note that the bound, given by the Sunflower Theorem \ref{sunny}, and the bound given in \cite{sunflowern-2} are both larger than $0.99999999 \theta_k^2$ for $q=2^8=256$ and $k=5$. This indicates that our new bound is a clear improvement. 
\end{example}

\subsection*{Acknowledgements}
The research of Jozefien D'haeseleer is supported by the FWO (Research Foundation Flanders). 
We would like to thank our colleague Lins Denaux for proof-reading this article in detail.

\end{document}